\newtheorem{theorem}{Theorem}
\newtheorem{lemma}{Lemma}
\newtheorem{corollary}{Corollary}
\theoremstyle{definition}
\newtheorem{definition}{Definition}
\newtheorem{example}{Example}
\theoremstyle{remark}
\numberwithin{equation}{section}
\begin{document}
\title{ Centers of Cuntz-Krieger  $C^*$-algebras }

%

\author{Adel Alahmadi}
\address{Department of Mathematics, Faculty of Science, King Abdulaziz University, P.O.Box 80203, Jeddah, 21589, Saudi Arabia}
\email{analahmadi@kau.edu.sa}
\author{Hamed Alsulami}
\address{Department of Mathematics, Faculty of Science, King Abdulaziz University, P.O.Box 80203, Jeddah, 21589, Saudi Arabia}
\email{hhaalsalmi@kau.edu.sa}
%
%

\keywords{Cuntz-Krieger algebra, Leavitt path algebra}

\maketitle

\begin{abstract}
 For a finite directed graph $\Gamma$ we determine the center of the Cuntz-Krieger $C^*$-algebra $CK(\Gamma).$
\end{abstract}

\maketitle

\section{Definitions and  Main Results}

 Let $\Gamma =(V,E,s,r)$ be a directed graph that consists of a set $V$ of vertices and a set $E$ of edges, and two maps $r:E\rightarrow V$  and $s:E\rightarrow V$ identifying   the range and the source of
each  edge. A graph is finite if  both sets $V$ and $E$ are finite. A graph is row-finite if   $|s^{-1}(v)|<\infty $ for an arbitrary vertex $v\in V.$

\medskip

\begin{definition}
If $\Gamma$ is a row-finite graph, the Cuntz-Krieger $C^*$-algebra $CK(\Gamma)$ is the universal $C^*$-algebra generated ( as a $C^*$-algebra ) by $V\cup E$ and satisfying the relations
(1) $s(e)e=er(e)=e$ for all $e\in E,$ (2) $e^*f=\delta_{e,f}r(f)$ for all $e,f \in E,$ (3) $v=\sum\limits_{e\in s^{-1}(v)} ee^*$ whenever $s^{-1}(v)\neq\emptyset.$
\end{definition}

\medskip

The discrete $\mathbb{C}$-subalgebra of $CK(\Gamma)$ generated by $V, E, E^*$ is isomorphic to the Leavitt path algebra $L(\Gamma)$ of the graph $\Gamma$ ( see [T2] ). We will identify $L(\Gamma)$
with its image in $CK(\Gamma).$ Clearly $L(\Gamma)$ is dense in $CK(\Gamma)$ in the $C^*$- topology.

\medskip

A path is a finite sequence $p=e_{1}\cdots e_{n}$ of edges with $r(e_{i})=s(e_{i+1})$
for $1\leq i\leq n-1$. We consider the vertices to be paths of length zero.
\medskip
We let $Path(\Gamma)$ denote the set of all paths in the graph $\Gamma $ and extend the maps $r, s$ to $Path(\Gamma)$ as follows: for $p=e_{1}\cdots e_{n}$ we set
$s(p)=s(e_1), r(p)=r(e_n) .$ For $v\in V$ viewed as a path we set $s(v)=r(v)=v.$
\medskip
A vertex $w$ is a descendant of a vertex $v$ if there exists a
path $p\in Path(\Gamma )$ such that $s(p)=v$, $r(p)=w$.

A cycle is a path $C=e_{1}\cdots e_{n}, n\geq 1$ such that $s(e_1)=r(e_n)$ and all vertices $s(e_{1}), \ldots ,   s(e_{n})$ are distinct.
\medskip
An edge $e\in E$ is called an exit from the cycle $C$ if $s(e)\in \{s(e_1),\cdots, s(e_n)\}$, but $e\notin \{ e_1,\cdots, e_n\}$. A cycle without an exit is called a $NE$-cycle.
\medskip
A subset $W\subset V$ is  hereditary if all descendants of  an arbitrary vertex $w\in W
$ also lie in $W.$
\medskip
For two nonempty subsets $W_1, W_2 \subset V$ let $E(W_1, W_2)$ denote the set of edges
$\{e\in E \mid s(e)\in W_1, r(e)\in W_2\}.$

\bigskip
For a hereditary subset $W\subset V$ the $C^*$-subalgebra of $CK(\Gamma)$ generated by $W, E(W,W)$ is isomorphic to the Cuntz-Krieger algebra of the graph
\newline $(W, E(W,W), s|_{E(W,W)}, r|_{E(W,W)}),$ ( see [BPRS], [BHRS] ). We will denote it as $CK(W).$

\begin{example}\label{ex1}
 Let the graph $\Gamma$ be a cycle, $V=\{v_1,\cdots, v_n\},$ $E=\{e_1,\cdots,e_n\},$ $s(e_i)=v_i, \, 1\leq i\leq n;$ $r(e_i)=v_{i+1}$ for $1\leq i\leq n-1,\, r(e_n)=v_1.$
 The algebra $CK(\Gamma)$ in this case is isomorphic to the matrix algebra $M_n(T),$ where $T$ is the $C^*$-algebra of continuous functions on the unit circle.
 The center of $CK(\Gamma)$ is generated by the element $e_1\cdots e_n+e_2e_3\cdots e_1+\cdots+ e_ne_1\cdots e_1$ and is isomorphic $T.$
\end{example}
\medskip
For an arbitrary row-finite graph $\Gamma$ if a path $C=e_1\cdots e_n$ is a $NE$-cycle with the hereditary set of vertices $V(C)=\{s(e_1),\cdots, s(e_n)\}$
then we denote $CK(C)=CK(V(C))\cong M_n(T).$ The center $Z(C)$ of the subalgebra $CK(C)$ of $CK(\Gamma)$ is generated by the element
$$z(C)=e_1\cdots e_n+e_2e_3\cdots e_ne_1+\cdots+e_ne_1\cdots e_{n-1}.$$

We will need some definitions and some results from [AA1], [AA2].

\begin{definition}
Let $W\subset V$ be a nonempty subset. We say that a path $p=e_1\cdots e_n,\, e_i\in E, $ is an \textit{arrival path } in $W$ if $r(p)\in W,$ and $\{s(e_1),\cdots, s(e_n)\}\nsubseteq W.$
In other words, $r(p)$ is the first vertex on $p$ that lies in $W.$ In particular, every vertex $w\in W,$ viewed as a path of zero length,  is an arrival path in $W.$ Let $Arr(W)$ be the set of all arrival paths in $W.$
\end{definition}
\medskip
\begin{definition}
A hereditary set $W\subseteq V $ is called finitary if $|Arr(W)|$ $<\infty $.

\end{definition}

If $W$ is a hereditary finitary subset of $V$ then $e(W)=\sum\limits_{p\in Arr(W)} pp^*$ is a central idempotent in $L(\Gamma)$ and, hence in $CK(\Gamma)$, see [AA1],[AA2].
\medskip
If $C$ is a $NE$-cycle in $\Gamma$ with the hereditary set of vertices $V(C)$ then we will denote $Arr(C)=Arr(V(C)).$
If the set $V(C)$ is finitary then we will say that the cycle is finitary. In this case for an arbitrary element $z\in Z(C)$ the sum $\sum\limits_{p\in Arr(C)} pzp^*$ lies in the center of
$L(\Gamma)$ and $CK(\Gamma),$ see [AA2].

\begin{theorem}\label{thm1}
Let $\Gamma$ be a finite graph. The center $Z(CK(\Gamma))$ is spanned by:
\begin{itemize}
  \item [(i)] central idempotents  $e(W),$ where $W$ runs over all nonempty hereditary finitary subsets of $V$;
  \item [(ii)] subspaces $\{\sum\limits_{p\in Arr(C)}pzp^*\mid z\in Z(C)\},$ where $C$ runs over all finitary $NE$-cycles of $\Gamma.$
\end{itemize}
\end{theorem}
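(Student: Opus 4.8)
The plan is to prove the two inclusions separately, the nontrivial direction being that \emph{every} central element is captured by (i) and (ii).

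\emph{The listed elements are central.} By [AA1], [AA2] each $e(W)$ and each $\sum\limits_{p\in Arr(C)}pzp^*$ lies in the center of the Leavitt path algebra $L(\Gamma)$. Since $L(\Gamma)$ is norm-dense in $CK(\Gamma)$ and multiplication in a $C^*$-algebra is jointly norm-continuous, anything commuting with a dense subalgebra commutes with everything; hence these elements, and their closed linear span, lie in $Z:=Z(CK(\Gamma))$.

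\emph{Reduction by the gauge action.} I would exploit the gauge action $\gamma\colon\mathbb T\to\operatorname{Aut}(CK(\Gamma))$ defined by universality via $\gamma_t(v)=v$, $\gamma_t(e)=te$. It is strongly continuous and carries $Z$ onto itself. For $z\in Z$ set
$$z_m=\int_{\mathbb T}t^{-m}\gamma_t(z)\,dt\qquad(m\in\mathbb Z).$$
Each $z_m$ is again central and gauge-homogeneous of degree $m$, and the Fejér means of $\sum_m z_m$ converge to $z$ in norm because $t\mapsto\gamma_t(z)$ is norm-continuous. Thus $z$ lies in the closed span of its homogeneous components, and it suffices to show each \emph{homogeneous} central element lies in the span of (i),(ii). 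This matches the candidates, which are themselves homogeneous: $e(W)$ has degree $0$, while $\sum\limits_{p}p\,z(C)^{k}p^*$ has degree $k\cdot\operatorname{length}(C)$.

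\emph{The degree-zero part and the higher-degree parts.} A gauge-invariant central element lies in the AF core $\mathcal F=CK(\Gamma)^{\gamma}$ and commutes with every $v,e,e^*$. Commuting with the orthogonal projections $\{v\}_{v\in V}$ forces $z_0=\sum\limits_{v\in V}vz_0v$, and commuting with the edges forces $z_0$ to be constant along descendants; combined with the finiteness of $\Gamma$ this is meant to pin $z_0$ down to a finite $\mathbb C$-combination of the hereditary idempotents $e(W)$, exactly as in the algebraic computation of [AA1], [AA2]. For $m\neq0$ the analysis localizes on cycles: commuting a degree-$m$ element with each $e^*$ shows its support can only run along cycles, and the no-exit hypothesis is precisely what prevents cancellation — an exit $f$ from a cycle $C$ would, under $f^*(\,\cdot\,)$, produce a cyclic term with no matching partner, forcing that part to vanish. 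Only finitary $NE$-cycles $C$ with $\operatorname{length}(C)\mid m$ survive, and centrality identifies the $C$-contribution with a scalar multiple of $\sum\limits_{p\in Arr(C)}p\,z(C)^{m/\operatorname{length}(C)}p^*$, an element of (ii).

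\emph{The main obstacle.} The hard part is the claim implicit above: that a homogeneous central element of the \emph{completion} is \emph{finitely supported}, i.e. actually lies in $L(\Gamma)$, so that the algebraic classification of [AA2] applies and the $C^*$-center adds nothing beyond the norm-closure of $Z(L(\Gamma))$. A priori such an element is only a norm-limit of finite sums $\sum\lambda_{p,q}pq^*$, and one must exclude genuinely spread-out central elements. I would do this inside the AF filtration $\mathcal F_k=\overline{\operatorname{span}}\{pq^*:\abs p=\abs q=k\}$, using the edge relations to bound, uniformly in $k$, which paths $p,q$ can appear with nonzero coefficient; the finiteness of $V,E$ together with the no-exit condition should supply the needed a priori bound on the lengths of the supporting paths. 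Once finite support is secured, the homogeneous component coincides with an element of $Z(L(\Gamma))$, and the theorem follows from the Fejér reduction.
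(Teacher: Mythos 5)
Your proposal takes a genuinely different route from the paper --- a global Fourier/Fej\'er decomposition with respect to the gauge action, followed by an analysis of each homogeneous component --- whereas the paper argues by induction on $|V|$: it takes a maximal proper hereditary subset $W$, shows $Z(CK(\Gamma))\subseteq \mathbb{C}\cdot 1+\overline{I}(W)$ (using the gauge group only in the special case where $\Gamma/W$ is a cycle, Lemmas 6--8), and handles $Z(CK(\Gamma))\cap\overline{I}(W)$ by the induction hypothesis applied to $CK(W)$ together with a semiprimeness argument (Lemma 4). Your first two steps (centrality of the listed elements, and the reduction to gauge-homogeneous central elements via Fej\'er means) are correct and cleanly stated.

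However, there is a genuine gap, and you have located it yourself: the entire content of the theorem sits in the step you label ``the main obstacle,'' and that step is asserted rather than proved. Two specific points are missing. First, you never explain why only \emph{finitary} hereditary subsets and \emph{finitary} $NE$-cycles can occur. For a finite graph a hereditary subset need not be finitary (see the paper's Example 2, where $\{v_2\}$ is hereditary but $Arr(\{v_2\})$ is infinite), and the expression $e(W)=\sum_{p\in Arr(W)}pp^*$ does not even define an element of $CK(\Gamma)$ when $Arr(W)$ is infinite. Extracting finitariness from centrality is a real analytic step --- it is exactly what the paper's Lemmas 1 and 3 do, by showing that for a central $z\in\overline{I}(W)$ only finitely many arrival paths $p$ can satisfy $\|pp^*z\|\geq\epsilon$. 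Second, your proposed mechanism for proving finite support --- a uniform bound on path lengths inside the AF filtration $\mathcal{F}_k$, coming from ``the finiteness of $V,E$ together with the no-exit condition'' --- is not an argument as it stands: the no-exit condition is a property of certain cycles in $\Gamma$, not a hypothesis available globally, and the supporting paths of $e(W)$ have lengths bounded only \emph{because} $W$ is finitary, which is the very thing in question. Phrases such as ``is meant to pin down'' and ``should supply the needed a priori bound'' mark the places where a proof is still required; as written, the degree-zero case and the degree-$m$ case each need a complete argument before the Fej\'er reduction yields the theorem.
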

\medskip
\begin{corollary}
$Z(CK(\Gamma))$ is the closure of $Z(L(\Gamma)).$

\end{corollary}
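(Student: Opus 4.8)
The plan is to prove the corollary by establishing the two inclusions $\overline{Z(L(\Gamma))}\subseteq Z(CK(\Gamma))$ and $Z(CK(\Gamma))\subseteq\overline{Z(L(\Gamma))}$. The first is soft and uses only that $L(\Gamma)$ is dense in $CK(\Gamma)$: if $z\in Z(L(\Gamma))$ and $x\in CK(\Gamma)$ is a limit of elements $y_k\in L(\Gamma)$, then joint continuity of multiplication gives $zx=\lim_k zy_k=\lim_k y_kz=xz$, so $Z(L(\Gamma))\subseteq Z(CK(\Gamma))$. Since $Z(CK(\Gamma))=\bigcap_{a\in CK(\Gamma)}\ker(\operatorname{ad}_a)$ is an intersection of kernels of the continuous maps $\operatorname{ad}_a\colon x\mapsto ax-xa$, it is a closed linear subspace; taking closures in the previous inclusion yields $\overline{Z(L(\Gamma))}\subseteq Z(CK(\Gamma))$.

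For the reverse inclusion I would invoke Theorem \ref{thm1} and check that each of its spanning generators already lies in $\overline{Z(L(\Gamma))}$. The idempotents $e(W)$ lie in $Z(L(\Gamma))$ by construction, so part (i) needs nothing. For part (ii), fix a finitary $NE$-cycle $C$ and consider the linear map $\Phi_C(z)=\sum_{p\in Arr(C)}pzp^*$. As $C$ is finitary the sum is finite, and each path $p$ satisfies $\|p\|=1$, so $\|\Phi_C(z)\|\le |Arr(C)|\,\|z\|$ and $\Phi_C$ is norm-continuous. Now $Z(C)$ is generated as a $C^*$-algebra by $z(C)$, so the $*$-polynomials in $z(C)$ form a norm-dense $*$-subalgebra $P\subseteq Z(C)$. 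For $q\in P$ the element $\Phi_C(q)$ is a finite algebraic expression in edges and their adjoints, whence $\Phi_C(q)\in L(\Gamma)$; it is also central in $CK(\Gamma)$ by the fact recalled before the theorem, so $\Phi_C(q)\in L(\Gamma)\cap Z(CK(\Gamma))\subseteq Z(L(\Gamma))$. Continuity of $\Phi_C$ together with $\overline{P}=Z(C)$ then gives $\Phi_C(Z(C))\subseteq\overline{\Phi_C(P)}\subseteq\overline{Z(L(\Gamma))}$.

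It remains to assemble the pieces. By Theorem \ref{thm1} an arbitrary element of $Z(CK(\Gamma))$ is a finite linear combination of elements $e(W)$ and elements $\Phi_C(z)$ with $z\in Z(C)$, each of which lies in the linear subspace $\overline{Z(L(\Gamma))}$; hence so does the combination, proving $Z(CK(\Gamma))\subseteq\overline{Z(L(\Gamma))}$ and therefore equality. All of the structural content sits in Theorem \ref{thm1}; the only step here that is not purely formal is recognizing $Z(C)$ as the norm-closure of the $*$-polynomials in $z(C)$, i.e. the density of the algebraic cycle-center inside the circle algebra $Z(C)\cong T$. Once that is granted, boundedness of $\Phi_C$ transports the approximation from $L(\Gamma)$ into $CK(\Gamma)$, and the corollary reduces to the observation that every generator in Theorem \ref{thm1} is a norm-limit of central elements of $L(\Gamma)$.
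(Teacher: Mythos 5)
Your argument is correct and is essentially the paper's intended one: the paper states this corollary without proof as an immediate consequence of Theorem \ref{thm1} together with the description of $Z(L(\Gamma))$ from [AA2], which is exactly what you carry out (closedness of $Z(CK(\Gamma))$ for one inclusion; for the other, that $e(W)\in Z(L(\Gamma))$ and that the Laurent polynomials in $z(C)$ are dense in $Z(C)\cong T$, transported by the bounded map $\Phi_C$). No gaps.
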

\medskip
\begin{corollary}
The center $Z(CK(\Gamma))$ is isomorphic to a finite direct sum $\mathbb{C}\oplus\cdots\oplus\mathbb{C}\oplus T\oplus\cdots \oplus T.$
\end{corollary}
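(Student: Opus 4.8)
The plan is to combine Theorem \ref{thm1} with Gelfand duality and the finiteness of $\Gamma$. Since $Z:=Z(CK(\Gamma))$ is a commutative unital $C^*$-algebra, it is isometrically isomorphic to $C(X)$ for $X$ its compact Hausdorff spectrum, so it suffices to show that $X$ is homeomorphic to a finite disjoint union of points and circles. Because $\Gamma$ is finite there are only finitely many hereditary subsets of $V$, hence finitely many central idempotents $e(W)$, and only finitely many $NE$-cycles. I would first record that $e(V)=\sum_{v\in V}v=\mathbf 1$: the only arrival paths into $W=V$ are the vertices themselves (any longer path has all its sources inside $V$), so $\mathbf 1\in Z$.

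Next I would exploit the family $\{e(W)\}$ of central, hence mutually commuting, projections. Being finite in number, they generate a finite-dimensional commutative $C^*$-subalgebra $A_0\subseteq Z$, i.e. a finite Boolean algebra of projections; let $f_1,\dots,f_m$ be its atoms. Then $\mathbf 1=e(V)=\sum_{i=1}^m f_i$ with $f_if_j=\delta_{ij}f_i$, giving an orthogonal decomposition $Z=\bigoplus_{i=1}^m f_iZ$ and a clopen partition $X=\bigsqcup_{i=1}^m X_i$ with $C(X_i)\cong f_iZ$. A multiplicative relation of the form $e(W_1)e(W_2)=e(W_1\cap W_2)$ (intersections of hereditary sets being hereditary; see [AA1],[AA2]) then shows that each atom $f_i$ sits below a unique minimal hereditary finitary set, which lets me match atoms to the data of Theorem \ref{thm1}.

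I would then locate the cycles. For a finitary $NE$-cycle $C$ the set $V(C)$ is a minimal nonempty hereditary set: since $C$ has no exit, the descendants of any vertex of $C$ are exactly $V(C)$, so $V(C)$ has no proper nonempty hereditary subset. Hence $e(W)e(V(C))\in\{0,e(V(C))\}$ for every hereditary finitary $W$, and $e(V(C))$ is itself an atom, say $f_{i(C)}$; distinct $NE$-cycles give disjoint vertex sets (a shared vertex would furnish an exit), hence distinct atoms. The key computation is that $\sum_{v\in V(C)}v$ is the unit of $Z(C)$, whence $\sum_{p\in Arr(C)}p\big(\sum_{v\in V(C)}v\big)p^*=\sum_{p\in Arr(C)}pp^*=e(V(C))$; thus the subspace of Theorem \ref{thm1}(ii) attached to $C$ is a unital subalgebra of $f_{i(C)}Z$ isomorphic to $Z(C)\cong T$. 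Using $e(V(C))e(V(C'))=0$ and the vanishing of $e(V(C))pp^*$ for arrival paths $p$ of other cycles, I would check that this subspace exhausts $f_{i(C)}Z$, so $f_{i(C)}Z\cong T$ and $X_{i(C)}\cong S^1$.

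Finally I would treat the remaining atoms. If $f_i$ is not of the form $e(V(C))$, every cycle subspace of Theorem \ref{thm1}(ii) is orthogonal to $f_i$, so $f_iZ(L(\Gamma))$ is spanned by the products $f_ie(W)\in\{0,f_i\}$, giving $f_iZ(L(\Gamma))=\mathbb C f_i$; as this is already closed, Corollary~1 ($Z(CK(\Gamma))=\overline{Z(L(\Gamma))}$) yields $f_iZ=\mathbb C f_i\cong\mathbb C$ and $X_i$ a point. Assembling the pieces, $X$ is a disjoint union of finitely many points (the non-cycle atoms) and finitely many circles (one per finitary $NE$-cycle), so $Z\cong\mathbb C^{\,a}\oplus T^{\,b}$ with $a$ the number of non-cycle atoms and $b$ the number of finitary $NE$-cycles. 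The main obstacle is the bookkeeping in the last two steps: proving that each cycle subspace is supported on exactly one atom and fills it completely, so that no $T$-summand is split or enlarged by passing to the closure, and dually that the non-cycle atoms carry no hidden continuous functions. Both reduce to the orthogonality identities for the idempotents $e(W)$ and the projections $pp^*$ of arrival paths, together with Corollary~1 to transfer the computation from the dense Leavitt subalgebra to its closure.
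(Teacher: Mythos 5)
Your argument is correct and follows the route the paper intends: the corollary is stated there without proof, but it is meant to fall out of Theorem \ref{thm1} exactly as you derive it, via the finite Boolean algebra generated by the commuting central projections $e(W)$ (cf.\ the paper's remark on the $1$--$1$ correspondence with finitary hereditary annihilator subsets) together with the observation that each finitary $NE$-cycle contributes an orthogonal summand isomorphic to $T$ whose unit is the atom $e(V(C))$, while the remaining atoms carry only scalars. The one imprecise point is the identity $e(W_1)e(W_2)=e(W_1\cap W_2)$, which is not quite what [AA1],[AA2] give (the Boolean operations there are taken on annihilator subsets, and $W_1\cap W_2$ need not be finitary a priori), but it is not load-bearing: your proof only needs that the $e(W)$ generate a finite-dimensional commutative $*$-algebra and that $e(W)e(V(C))\in\{0,e(V(C))\}$, both of which you can and essentially do verify directly.
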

\medskip
In [CGBMGSMSH] it was shown that the center of a prime Cuntz-Krieger $C^*$-algebra is equal to $\mathbb{C} \cdot 1$ ( for a finite graph ) or to $(0)$ ( for infinite graph ).

We remark that for two distinct hereditary subsets $W_1\subsetneqq W_2$ the central idempotents $e(W_1), e(W_2)$ may be equal.

To make the statement of the Theorem more precise we will consider annihilator hereditary subsets.

Let $W$ be a nonempty subset of $V.$ Consider the subset $W^\perp =\{v\in V\mid \text{ the vertex } v \text{ does not have  descendants in } W\}.$

For empty set we let $\emptyset^\perp=V.$ It is easy to see that $W^\perp$ is always a hereditary subset of $V.$ If $W_1\subseteq W_2\subseteq V$ then
$W_1^\perp\supseteq W_2^\perp,\, (W^\perp)^\perp\supseteq W,\, ((W^\perp)^\perp)^\perp=W^\perp.$
\medskip
\begin{definition}

We will refer to $W^\perp,\, W\subseteq V$ as annihilator hereditary subsets.

\end{definition}
\medskip
In [AA1] it was proved that $(W^\perp)^\perp $ is the largest hereditary subset of $V$ such that every vertex in it has a descendant in $W$ and that $e(W)=e((W^\perp)^\perp).$

Hence in the part (i) of the Theorem we can let $W$ run over nonempty finitary hereditary annihilator subsets of $V.$

In fact there is a 1-1 correspondence ( and a Boolean algebra isomorphism ) between finitary hereditary annihilator subsets  of $V$ and central idempotents of $L(\Gamma)$
and $CK(\Gamma).$
\medskip
\begin{example}
Let $\Gamma=$ \includegraphics[width=.2\textwidth]{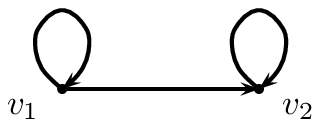} . The set $\{v_2\}$ is hereditary, but not finitary. Thus there are no proper finitary hereditary subsets and
$Z(L(\Gamma))=\mathbb{C}\cdot1.$
\end{example}
\medskip
\begin{example}
Let $\Gamma=$ \includegraphics[width=.1\textwidth]{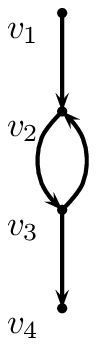} . The set $\{v_2,v_3,v_4\}$ is hereditary and  finitary, but $(\{v_2,v_3,v_4\}^\bot)^\bot=V.$ Thus there are no proper finitary hereditary annihilator subsets and again $Z(L(\Gamma))=\mathbb{C}\cdot1.$
\end{example}
\medskip
\begin{example}
Let $\Gamma=$ \includegraphics[width=.2\textwidth]{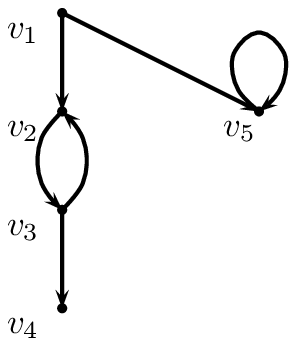} . The only finitary hereditary annihilator subsets are $\{v_5\}$ and  $\{v_2,v_3,v_4\}.$ Hence $Z(L(\Gamma))=T\oplus \mathbb{C}.$
\end{example}

\section{ Closed Ideals}

\bigskip

Let $W$ be a hereditary subset of $V.$ The ideal $I(W)$ of the Leavitt path algebra generated by the set W is the $\mathbb{C}$-span of the set $\{pq^*\mid p,q\in Path(\Gamma), r(p)=r(q)\in W\}.$ The closure $\overline{I}(W)$ of the ideal $W$ is the closed ideal of the $C^*$-algebra $CK(\Gamma)$ generated by the set $W.$

In this section we will use induction on $|V|$ to prove that for a proper hereditary subset $W\subset V$  central elements from $Z(CK(\Gamma))\cap \overline{I}(W)$ are of the form predicted by the Theorem.

\medskip

 Given paths $p,q\in Path(\Gamma)$ we say  $q$ is a \textit{continuation} of the path $p$ if there exists a path $p'\in Path(\Gamma)$ such that
$q=pp'.$ In this case the path $p$
 is called a \textit{beginning} of the path $q.$ We will often use the following well known fact:
 if $p,q\in Path(\Gamma)$, $p^*q\neq 0,$ then one of the paths $p,q$ is a continuation of the other one.
 
\medskip

 Remark that if $W$ is a hereditary subset of $V$ and $p,q\in Arr(W), p\neq q,$ then none of the paths $p,q$ is a continuation of the other one.
 
\medskip

 For a path $p\in Path(\Gamma)$ we consider the idempotent $e_p=pp^*.$

\begin{lemma}\label{lem1}
Let $W$ be a nonempty hereditary subset of $V$ let $a\in \overline{I}(W),$ let $\epsilon>0.$
Then the set $\{p\in Arr(W)\mid \|e_pa\|\geq\epsilon\}$ is finite.
\end{lemma}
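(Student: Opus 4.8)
The plan is to reduce from the element $a$ to a nearby element of the dense ideal $I(W)$, and then show by a combinatorial argument on paths that such an element can fail to annihilate the projection $e_p$ for only finitely many arrival paths $p$. Throughout I use that each $e_p=pp^*$ is a projection, so $\|e_p\|\le 1$.

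First I would fix $b\in I(W)$ with $\|a-b\|<\epsilon/2$; this is possible because $a\in\overline{I}(W)$. Then $\|e_p(a-b)\|\le\|a-b\|<\epsilon/2$, so whenever $\|e_p a\|\ge\epsilon$ we get $\|e_p b\|\ge\|e_p a\|-\|e_p(a-b)\|>\epsilon/2>0$, and in particular $e_p b\ne 0$. Hence it suffices to prove that $\{p\in Arr(W)\mid e_p b\ne 0\}$ is finite.

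Next, write $b=\sum_{i=1}^{n}\lambda_i p_i q_i^*$ with $p_i,q_i\in Path(\Gamma)$ and $r(p_i)=r(q_i)\in W$. Since $e_p b=\sum_i\lambda_i\,pp^*p_iq_i^*$, the condition $e_p b\ne 0$ forces $p^*p_i\ne 0$ for some $i$, and by the quoted fact one of $p,p_i$ is then a continuation of the other. The crux is to rule out $p$ being a proper continuation of $p_i$: if $p=p_ip''$ with $p''$ of positive length, then the junction vertex $r(p_i)$ is an interior source vertex of $p$ lying in $W$, which contradicts $p$ being an arrival path, whose only vertex in $W$ is its range $r(p)$. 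Therefore $p$ must be a beginning of some $p_i$.

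Finally, each of the finitely many paths $p_1,\dots,p_n$ has only finitely many beginnings, so there are only finitely many arrival paths $p$ that are beginnings of some $p_i$; this finite set contains $\{p\in Arr(W)\mid e_p b\ne 0\}$, which completes the argument. I expect the one genuinely delicate point to be the interior-vertex argument excluding proper continuations, which is exactly where the defining property of $I(W)$ (namely $r(p_i)\in W$) and of arrival paths interact; the remaining steps are a routine approximation estimate together with the elementary finiteness of the set of beginnings of a finite family of paths.
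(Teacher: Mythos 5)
Your proof is correct and follows essentially the same route as the paper: approximate $a$ by an element $b$ of the dense ideal $I(W)$ and show that $e_pb=0$ (or at least $\|e_pb\|$ is small) for all but finitely many arrival paths $p$. The only difference is cosmetic: the paper writes $b$ in the canonical form $\sum_{p,q\in Arr(W)}pb_{p,q}q^{*}$ with $p,q\in Arr(W)$ and uses that distinct arrival paths are incomparable, whereas you work with the raw spanning set $\{p_iq_i^{*}\}$ and carry out the same continuation/beginning argument explicitly — your interior-vertex step is exactly the content of that canonical form.
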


\begin{proof}
The ideal $I(W)$ generated by the set $W$ in the Leavitt path algebra $L(\Gamma)$ is dense in $\overline{I}(W).$ Choose an element $b\in I(W),$
$b=\sum\limits_{p,q\in Arr(W)} pb_{p,q}q^*,$ $b_{p,q}\in L(W),$ such that $\|a-b\|<\epsilon.$ If $p'\in Arr(W)$ is an arrival path in $W$ that is different from all the
paths involved in the decomposition of the element $b$ then $p'^*b=0$ and $e_{p'}b=0.$

Now $$\|e_{p'}a\|=\|e_{p'}(a-b)\|\leq \|e_{p'}\|\|a-b\|<\epsilon,$$ which proves the Lemma.
\end{proof}

\begin{lemma}\label{lem2}
Let $a\in CK(\Gamma),$ $v\in V,$ $a\in vCK(\Gamma)v;$ $p\in Path(\Gamma), r(p)=v.$
Then $\|pap^*\|=\|a\|.$
\end{lemma}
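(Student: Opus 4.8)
The plan is to exploit the fact that the path $p$ behaves as a partial isometry whose \emph{initial} projection is exactly $r(p)=v$. First I would record the algebraic identity $p^*p=r(p)=v$, which follows from the Cuntz--Krieger relations by induction on the length of $p$: for a single edge $e$, relation (2) gives $e^*e=r(e)$, and for $p=e_1\cdots e_n$ one peels off the first edge using $e_1^*e_1=r(e_1)=s(e_2)$ together with $s(e_2)e_2=e_2$ from relation (1), reducing to the path $e_2\cdots e_n$. The hypothesis $a\in v\,CK(\Gamma)\,v$ means $a=vxv$ for some $x\in CK(\Gamma)$, whence $va=av=a$; in particular $vav=a$, so $v$ acts as a two-sided unit on $a$.

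If $v=0$ then $a=0$ and $pap^*=0$, so the statement is trivial; assume $v\neq 0$. Then $v$ is a nonzero projection, so $\|v\|=1$, and the $C^*$-identity gives $\|p\|^2=\|p^*p\|=\|v\|=1$, hence $\|p\|=\|p^*\|=1$. The upper bound is now immediate from submultiplicativity of the norm:
\[
\|pap^*\|\le\|p\|\,\|a\|\,\|p^*\|=\|a\|.
\]

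For the reverse inequality I would recover $a$ from $pap^*$ using the initial-projection identity. Since $p^*p=v$ and $vav=a$,
\[
p^*(pap^*)p=(p^*p)\,a\,(p^*p)=vav=a,
\]
and applying submultiplicativity once more yields $\|a\|=\|p^*(pap^*)p\|\le\|p^*\|\,\|pap^*\|\,\|p\|=\|pap^*\|$. Combining the two inequalities gives $\|pap^*\|=\|a\|$.

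Conceptually, the two estimates together say that $a\mapsto pap^*$ is an isometry of the corner $v\,CK(\Gamma)\,v$ onto the corner $e_p\,CK(\Gamma)\,e_p$ with inverse $b\mapsto p^*bp$; both maps are readily checked to be $*$-homomorphisms on these corners (using $av=va=a$ and $e_pb=be_p=b$), so one could alternatively invoke the standard fact that an injective $*$-homomorphism of $C^*$-algebras is isometric. The only genuinely non-formal ingredient is the identity $p^*p=v$, i.e.\ that the initial projection of $p$ is precisely the unit acting on $a$; once that is established the norm equality is a two-line submultiplicativity argument, so I do not expect any real obstacle here.
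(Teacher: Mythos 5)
Your proof is correct and follows essentially the same route as the paper: the upper bound by submultiplicativity together with $\|p\|=\|p^*\|=1$, and the lower bound by recovering $a=p^*(pap^*)p$ from $p^*p=v$ and $vav=a$. The only difference is that you spell out the justification of $p^*p=r(p)$ and $\|p\|=1$, which the paper leaves implicit.
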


\begin{proof}
We have $$\|pap^*\|\leq \|p\|\cdot\|a\|\cdot\|p^*\|=\|a\|.$$
On the other hand, $a=p^*(pap^*)p,$
 $$\|a\|\leq\|p^*\|\cdot\|pap^*\|\cdot\|p\|=\|pap^*\|,$$ which proves the Lemma.
\end{proof}

\begin{lemma}\label{lem3}
Let $W$ be a nonempty hereditary subset of $V$ let $z\in Z(CK(\Gamma)\cap\overline{I}(W).$ For an arbitrary vertex $w\in W$ if $wz\neq 0$
then the set $\{p\in Arr(W)\mid r(p)=w\}$ is finite.
\end{lemma}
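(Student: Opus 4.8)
The plan is to combine the two preceding lemmas by showing that the norm $\|e_p z\|$ takes the same positive value for every arrival path $p\in Arr(W)$ with $r(p)=w$, and then to invoke the finiteness statement of Lemma \ref{lem1}. The whole argument rests on the identity $e_p z = p(wz)p^*$, so I would first pin that down, and only afterwards feed it into Lemma \ref{lem1}.

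First I would record that $wz$ lies in the corner $wCK(\Gamma)w$. Since $z$ is central and $w$ is an idempotent, $wzw=w(zw)=w(wz)=w^2z=wz$, so $wz=wzw\in wCK(\Gamma)w$. Next, fixing a path $p\in Arr(W)$ with $r(p)=w$, I would use that $r(p)=w$ forces $pw=p$, together with the centrality of $z$ (so $p^*z=zp^*$), to compute
$$e_p z = pp^*z = p(p^*z) = p(zp^*) = pzp^* = (pw)zp^* = p(wz)p^*.$$
Applying Lemma \ref{lem2} with $a=wz\in wCK(\Gamma)w$ and this $p$ (which satisfies $r(p)=w$) then yields $\|e_p z\| = \|p(wz)p^*\| = \|wz\|$. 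The point of this step is that the right-hand side does not depend on $p$: the norm $\|e_p z\|$ is the constant $\|wz\|$ across all arrival paths ending at $w$.

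Finally, since $wz\neq 0$ I would set $\epsilon=\|wz\|>0$. As $z\in\overline{I}(W)$, Lemma \ref{lem1} guarantees that $\{p\in Arr(W)\mid \|e_p z\|\geq\epsilon\}$ is finite. Every $p\in Arr(W)$ with $r(p)=w$ satisfies $\|e_p z\|=\epsilon$ by the previous step and hence lies in this finite set, giving the desired conclusion. I do not expect a genuine obstacle here, since the substance is carried by the two earlier lemmas; the only place requiring care is the algebraic identity $e_p z=p(wz)p^*$, which depends on the centrality of $z$ and the relation $pw=p$, and the observation that its norm is independent of $p$.
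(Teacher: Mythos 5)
Your proposal is correct and follows essentially the same route as the paper: the identity $e_pz=pzp^*=p(wzw)p^*$ via centrality, then Lemma \ref{lem2} to get the $p$-independent norm $\|wz\|$, then Lemma \ref{lem1} with $\epsilon=\|wz\|$. You have simply spelled out the intermediate steps ($wz=wzw$ and $pw=p$) that the paper leaves implicit.
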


\begin{proof}
Let $p\in Arr(W),r(p)=w, zw\neq0.$ We have $e_pz=pp^*z=pzp^*=p(wzw)p^*.$
By Lemma 2 $\|e_pz\|=\|wz\|>0.$ Now it remains to refer to Lemma 1.
\end{proof}

\medskip

Let $\widetilde{Z}$ denote the sum of all subspaces $\mathbb{C}e(W),$ where $W$ runs over nonempty finitary hereditary subsets of $V,$ and all subspaces
$\{\sum\limits_{p\in Arr(C)} pzp^*\mid z\in Z(C)\},$ where $C$ runs over finitary $NE$-cycles of $\Gamma.$

\medskip

Our aim is to show that $Z(CK(\Gamma))=\widetilde{Z}.$

\medskip

Let's use induction on the number of vertices. In other words, let's assume that for a graph with $<|V|$ vertices the assertion of the Theorem is true.

\medskip

\begin{lemma}\label{lem4}
Let $W$ be a proper hereditary subset of $V.$ Then $Z(CK(\Gamma)\cap\overline{I}(W)\subseteq\widetilde{Z}.$
\end{lemma}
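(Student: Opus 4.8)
The plan is to pass to the hereditary subgraph on $W$, invoke the inductive hypothesis there, and then ``spread'' the resulting central element back up along the arrival paths. Concretely, write $1_W=\sum_{w\in W}w$ for the identity of $CK(W)$ and set $z_W=1_Wz=1_Wz1_W$. Since $z$ is central and $1_W$ is an idempotent commuting with it, one checks $z_Wx=zx=xz=xz_W$ for every $x\in CK(W)$, so $z_W\in Z(CK(W))$; moreover, because $W$ is hereditary, a paths argument shows $1_W\overline{I}(W)1_W\subseteq CK(W)$, so $z_W$ genuinely lives in $CK(W)$. The first substantial step is the decomposition $z=\sum_{p\in Arr(W)}pz_Wp^*$. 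Using $p=pr(p)$ and the centrality of $z$ one gets $e_pz=pz_Wp^*$, and $e_pz\neq0$ exactly when $r(p)z\neq0$ (Lemma~\ref{lem2} gives $\|e_pz\|=\|r(p)z\|$). Since $V$ is finite and Lemma~\ref{lem3} bounds the number of arrival paths ending at each $w$ with $wz\neq0$, only finitely many summands are nonzero; a routine approximation with the projections $1-\sum_{p\in F}e_p$, as in the proof of Lemma~\ref{lem1}, shows that these finitely many terms recover $z$.

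Next I would apply the inductive hypothesis. As $W$ is proper, $CK(W)$ is the Cuntz--Krieger algebra of a graph with $<|V|$ vertices, so $Z(CK(W))=\widetilde{Z}_W$ is spanned by the idempotents $e_W(W')$ ($W'\subseteq W$ finitary hereditary in the $W$-subgraph) and the cycle subspaces $\{\sum_{q\in Arr_W(C)}qfq^*:f\in Z(C)\}$ ($C$ a finitary $NE$-cycle in $W$). The point is that the spreading operation $a\mapsto\sum_{p\in Arr(W)}pap^*$ carries each such generator to the corresponding generator of $\widetilde{Z}$ for $\Gamma$. This rests on the concatenation bijection $Arr_\Gamma(W')\cong\{pq:p\in Arr(W),\,q\in Arr_W(W'),\,r(p)=s(q)\}$, valid since any $\Gamma$-arrival path to $W'\subseteq W$ meets $W$ first at an interior vertex and splits uniquely there; the same bijection holds with $W'$ replaced by $V(C)$. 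Hence $\sum_p pe_W(W')p^*=e(W')$ and $\sum_p p(\sum_q qfq^*)p^*=\sum_{p'\in Arr(C)}p'fp'^*$, the $\Gamma$-cycle element. One also checks that $W'$ stays hereditary in $V$ and $C$ stays an $NE$-cycle in $\Gamma$: an exit in $\Gamma$ would have its source in $V(C)\subseteq W$ and, by heredity, its range in $W$, hence would already be an exit inside the $W$-subgraph.

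The main obstacle is finiteness: the identities above only make sense, and only land in $\widetilde{Z}$, when the sets $W'$ and cycles $C$ that actually occur are finitary in $\Gamma$. If some $W'$ with nonzero coefficient were non-finitary in $\Gamma$, then $\sum_{p}pe_W(W')p^*$ would be an infinite orthogonal sum of projections and could not converge, yet $z$ is a genuine element of the $C^*$-algebra. To rule this out I would use the minimal central idempotents of $Z(CK(W))$: decomposing $z_W=\sum_i\alpha_if_i+\sum_j z_Wg_j$ along $Z(CK(W))\cong\mathbb{C}^a\oplus T^b$, one has $z_Wf_i=\alpha_if_i$, so whenever $f_i$ occurs ($\alpha_i\neq0$) its vertex support lies in $W_z=\{w:wz\neq0\}$, with no cancellation against the remaining summands. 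Combined with Lemma~\ref{lem3} (finitely many arrival paths to each vertex of $W_z$) and the finiteness of $V$, this forces each occurring $W'$, and likewise each occurring cycle $C$, to be finitary in $\Gamma$. The concatenation identities then exhibit $z$ as a finite linear combination of the generators $e(W')$ and $\sum_{p'\in Arr(C)}p'fp'^*$, that is $z\in\widetilde{Z}$, completing the inductive step.
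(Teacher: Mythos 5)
Your argument is correct, and it shares the paper's skeleton (restrict $z$ to $CK(W)$, apply the inductive hypothesis there, lift back along $Arr(W)$, and use Lemma~\ref{lem3} to force finitariness in $\Gamma$), but the mechanism by which you identify $z$ with the lifted element is genuinely different. The paper never proves the expansion $z=\sum_{p\in Arr(W)}pz_Wp^*$: instead it builds the candidate $z'=\sum_i\alpha_i\bigl(\sum_{p\in Arr(C_i)}pa_ip^*\bigr)+\sum_j\beta_je(W_j)\in\widetilde{Z}$, observes that $(z-z')\cdot\sum_{w\in W}w=0$, hence $(z-z')\overline{I}(W)=0$ and $(z-z')^2=0$, and concludes $z=z'$ from semiprimeness of $CK(\Gamma)$; the same semiprimeness trick also disposes of the case $z_0=z\cdot 1_W=0$. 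You replace both uses of semiprimeness by the direct decomposition $z=\sum_{p}e_pz$ with finitely many nonzero terms, justified by the net of projections $f_G=\sum_{p\in G}e_p$ ($G\subset Arr(W)$ finite), which satisfies $\|f_G\|\le 1$ and $f_Gb\to b$ on the dense subspace $I(W)$, together with the concatenation bijection $Arr_\Gamma(W')\cong\{pq:p\in Arr(W),\,q\in Arr_W(W'),\,r(p)=s(q)\}$ to recognize the lift of each generator of $Z(CK(W))$ as the corresponding generator of $\widetilde{Z}$. Your route is more constructive and makes the structure of $z$ explicit, at the cost of the approximation argument and the bookkeeping of the bijection; the paper's route is shorter because semiprimeness lets it compare $z$ with $z'$ without ever expanding $z$. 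Both proofs rely at the same point on the disjointness of the supports of the minimal central summands of $Z(CK(W))$ (quoted from [AA2]) to rule out cancellation when applying Lemma~\ref{lem3}, so no new gap is introduced there.
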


\begin{proof}
Let $ 0\neq z\in Z(CK(\Gamma)\cap\overline{I}(W).$ Consider the element $z_0=z(\sum\limits_{w\in W} w)\in Z(CK(W)).$
If $z_0=0$ then $zW=(0), z\overline{I}(W)=(0),z^2=0,$ which contradicts semiprimeness of $CK(\Gamma)$ ( see [BPRS],[BHRS]).

By the induction assumption there exist disjoint hereditary finitary ( in $W$) cycles $C_1,\cdots, C_r$ and hereditary finitary (again in $W$) subsets
$W_1,\cdots, W_k\subset W$ such that $$z_0=\sum\limits_{i=1}^r \alpha_i \left(\sum\limits_{p\in Arr_{W}(C_i)}pa_ip^*\right)+\sum\limits_{j=1}^k \beta_j \left(\sum\limits_{q\in Arr_{W}(W_j)}qq^*\right); \alpha_i,\beta_j\in\mathbb{C}, a_i\in Z(C_i).$$

The notations $Arr_{W}(C_i), Arr_{W}(W_j)$ are used to stress that arrival paths are considered in the graph $(W,E(W,W)).$

The fact that the hereditary finitary subsets $V(C_i),W_j$ can be assumed disjoint follows from the description of the Boolean algebra of finitary hereditary subsets in [AA2].

If $\alpha_i\neq 0$ then for arbitrary vertex $w\in V(C_i)$ we have $z_0w=zw\neq0.$ Hence by Lemma 3 there are only finitely many paths $p\in Arr(W)$ such that $r(p)=w.$
Hence $V(C_i)$ is a finitary subset of $V.$ Similarly, if $\beta_j\neq 0$ then $W_j$ is a finitary subset in $V.$

Consider the central element $$z'=\sum\limits_{i=1}^r \alpha_i \left(\sum\limits_{p\in Arr(C_i)}pa_ip^*\right)+\sum\limits_{j=1}^k \beta_j e(W_j)\in Z(CK(\Gamma)).$$
We have $z'(\sum\limits_{w\in W} w)=z_0=z(\sum\limits_{w\in W} w).$ Hence, $(z-z')(\sum\limits_{w\in W} w)=(0),(z-z')\overline{I}(W)=(0), (z-z')^2=0.$
Again by semiprimeness of $CK(\Gamma)$ we conclude that $z=z',$ which proves the Lemma.

\end{proof}

\section{ Proof of the Theorem}

\begin{definition}

A vertex $v\in V$ is called a sink if $s^{-1}(v)=\emptyset.$

\end{definition}
\medskip
\begin{definition}

A hereditary subset $W\subset V$ is called saturated if for an arbitrary non-sink vertex $v\in V$ the inclusion $r(s^{-1}(v))\subseteq W$ implies $v\in W.$
\end{definition}
\medskip
\begin{definition}

If $W$ is hereditary subset then we define the saturation of $W$ to be the smallest saturated hereditary subset $\widehat{W}$ that contains $W.$ In this case $I(W)=I(\widehat{W}).$
\end{definition}
\medskip
\begin{definition}
If $W$ is a hereditary saturated subset of $V$ then the graph $\Gamma/W=(V\setminus W, E(V,V\setminus W))$ is called the factor graph of $\Gamma$ modulo $W.$

\end{definition}
We have $CK(\Gamma)/\overline{I}(W)\cong CK(\Gamma/W)$ ( see[T1] ).
\medskip
In [AAP], [T2] it was proved that the following 3 statements are equivalent:

\begin{itemize}
  \item [1)] the Cuntz-Krieger $C^*$-algebra $CK(\Gamma)$ is simple,
  \item [2)] the Leavitt path algebra $L(\Gamma)$ is simple,
  \item [3)] (i) $V$ does not have proper hereditary saturated subsets, (ii) every cycle has an exit.
\end{itemize}

We call a graph satisfying the condition 3) simple.

The following lemma is well known. Still we prove it for the sake of completeness.

\begin{lemma}\label{lem5}
Let $\Gamma$ be a graph such that $V$ does not have proper hereditary subsets. Then $\Gamma$ is either simple or a cycle.
\end{lemma}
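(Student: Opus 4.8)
The plan is to split on whether every cycle of $\Gamma$ has an exit, and to observe first that the saturation hypothesis in the definition of a simple graph is automatic here. Since $V$ has no proper hereditary subsets, it certainly has no proper hereditary \emph{saturated} subsets, so condition (i) in the characterization of simplicity holds for $\Gamma$ without any further work. Hence the only way $\Gamma$ can fail to be simple is through condition (ii).

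Next I would assume $\Gamma$ is not simple and deduce that it must be a cycle. By the previous remark, the failure of simplicity means some cycle $C=e_1\cdots e_n$ possesses no exit, i.e.\ $C$ is an $NE$-cycle. The crucial step is then to check that its vertex set $V(C)=\{s(e_1),\ldots,s(e_n)\}$ is hereditary. This is where the no-exit condition does all the work: for a cycle vertex $v=s(e_i)$, any edge $e$ with $s(e)=v$ and $e\neq e_i$ would by definition be an exit from $C$, so in fact $s^{-1}(s(e_i))=\{e_i\}$. Since $r(e_i)=s(e_{i+1})\in V(C)$ (and $r(e_n)=s(e_1)\in V(C)$), every edge leaving a cycle vertex returns to $V(C)$; iterating along paths shows that every descendant of a cycle vertex lies in $V(C)$, so $V(C)$ is hereditary.

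Finally I would use the hypothesis to collapse $\Gamma$ onto $C$. The set $V(C)$ is nonempty and hereditary, so by assumption $V(C)=V$. Consequently every vertex of $\Gamma$ is some $s(e_i)$, and the identity $s^{-1}(s(e_i))=\{e_i\}$ established above forces every edge of $\Gamma$ to be one of $e_1,\ldots,e_n$; that is, $E=\{e_1,\ldots,e_n\}$. Thus $\Gamma$ coincides with the cycle $C$, which completes the non-simple case and proves the dichotomy.

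I expect no serious obstacle: the whole argument rests on the single observation that an $NE$-cycle has hereditary vertex set, after which the absence of proper hereditary subsets immediately forces the entire graph to be that cycle. The only point requiring a little care is the bookkeeping that shows $V(C)$ is closed under \emph{all} descendants, not merely immediate successors; this follows by a routine induction on path length using $s^{-1}(s(e_i))=\{e_i\}$, and the degenerate case $n=1$ (a single self-loop) is handled by the same reasoning.
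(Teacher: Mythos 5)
Your proof is correct and follows essentially the same route as the paper's: failure of simplicity yields an $NE$-cycle $C$, whose vertex set $V(C)$ is hereditary, hence equals $V$, forcing $\Gamma$ to be the cycle. You simply supply the details the paper leaves implicit (that condition (i) of simplicity is automatic, that $V(C)$ is hereditary, and that the edges also coincide), all of which check out.
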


\begin{proof}
If $\Gamma$ is not simple then $\Gamma$ contains a $NE$-cycle $C.$ The set of vertices $V(C)$ is hereditary subset of $V.$
In view of our assumption $V(C)=V,$ which proves the Lemma.

\end{proof}

Let $\mathrm{U}=\{\alpha\in\mathbb{C}\mid |\alpha|=1\}$ be the unit circle in $\mathbb{C}.$ Let $E'$ be a subset of the set $E$ of edges.
For an arbitrary $\alpha\in\mathrm{U}$ the mapping $g_{E'}(\alpha)$ such that $g_{E'}(\alpha):v\mapsto v, v\in V;$ $g_{E'}(\alpha):e\mapsto \alpha e, e^*\mapsto \overline{\alpha}e, e\in E';$ $g_{E'}(\alpha):e\mapsto  e, e^*\mapsto e^*, e\in E\setminus E',$ extends to an automorphism $g_{E'}(\alpha)$ of the $C^*$-algebra $CK(\Gamma).$
Denote $G_{E'}=\{g_{E'}(\alpha),\alpha\in\mathrm{U}\}\leq Aut(CK(\Gamma)).$
The group $G_E$ is called the gauge group of the $C^*$-algebra $CK(\Gamma).$ An ideal of $CK(\Gamma)$ is called gauge invariant if it is invariant with respect to the group $G_E.$

In [BPRS], [BHRS] it is proved that a nonzero closed gauge invariant ideal of $CK(\Gamma)$ has a nonempty intersection with $V.$

\begin{lemma}\label{lem6}
Let the graph $\Gamma$ be a cycle, $\Gamma=(V,E),\, V=\{v_1,\cdots,v_d\}, E=\{e_1,\cdots, e_d\},$ $s(e_i)=v_i$ for $1\leq i\leq d;$ $r(e_i)=v_{i+1}$ for $1\leq i\leq d-1,$ $r(e_d)=v_1.$ Then the central elements from $Z(CK(\Gamma))$ that are fixed by all $g_{E}(\alpha),\alpha\in\mathrm{U},$ are scalars.
\end{lemma}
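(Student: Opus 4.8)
The plan is to combine the explicit description of the center coming from Example~\ref{ex1} with the observation that the gauge automorphisms scale the homogeneous components of $CK(\Gamma)$ according to edge-length.

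First I would invoke Example~\ref{ex1}: since $\Gamma$ is a cycle on $d$ vertices, $Z(CK(\Gamma))\cong T=C(\mathrm U)$, and this center is generated as a $C^*$-algebra by the single unitary $z=z(C)=e_1\cdots e_d+e_2\cdots e_de_1+\cdots+e_de_1\cdots e_{d-1}$. Identifying $Z$ with $C(\sigma(z))=C(\mathrm U)$ via the Gelfand transform, $z$ becomes the coordinate function $\zeta\mapsto\zeta$ on $\mathrm U$.

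Next I would determine the gauge action on $Z$. Each summand of $z$ is a loop consisting of exactly $d$ edges, so $g_E(\alpha)$ multiplies it by $\alpha^d$; hence $g_E(\alpha)(z)=\alpha^d z$. Since $g_E(\alpha)$ is an automorphism of $CK(\Gamma)$ it maps the center onto itself, and its restriction to $Z\cong C(\mathrm U)$ is a $*$-automorphism, hence is induced by a homeomorphism $h_\alpha$ of $\mathrm U$. The identity $g_E(\alpha)(z)=\alpha^d z$ then forces $h_\alpha$ to be the rotation $\zeta\mapsto\alpha^d\zeta$.

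Finally I would conclude. A central element $w$ fixed by every $g_E(\alpha)$ corresponds to a function $f\in C(\mathrm U)$ satisfying $f(\alpha^d\zeta)=f(\zeta)$ for all $\alpha,\zeta\in\mathrm U$. Because $\alpha\mapsto\alpha^d$ maps $\mathrm U$ onto $\mathrm U$, the function $f$ is invariant under every rotation of the circle and is therefore constant, i.e. $w\in\mathbb C\cdot 1$. (Equivalently, in Fourier terms $w=\sum_n c_n z^n$, the power $z^n$ has gauge degree $nd$ and is scaled by $\alpha^{nd}$, so fixing $w$ under all $\alpha$ annihilates every term with $n\neq 0$.) I expect the only delicate point to be the passage from invariance under the one-parameter family $\{g_E(\alpha)\}$ to invariance under all rotations: this rests on the surjectivity of $\alpha\mapsto\alpha^d$ onto $\mathrm U$, without which one could only conclude invariance under rotations by $d$-th powers. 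The degree count and the citation of Example~\ref{ex1} are routine.
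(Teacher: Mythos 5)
Your proposal is correct and follows essentially the same route as the paper: identify $Z(CK(\Gamma))$ with $C(\mathrm U)$, observe that $g_E(\alpha)$ acts on it as the rotation $u\mapsto\alpha^d u$ (the paper asserts this formula; you derive it from the degree count on the generator $z(C)$), and conclude that a function invariant under all such rotations is constant because $\alpha\mapsto\alpha^d$ is onto $\mathrm U$. The extra care you take in justifying the action on the center and the surjectivity point only fills in details the paper leaves implicit.
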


\begin{proof}
The center of $CK(\Gamma)$ is isomorphic to the algebra of continuous function $T=\{f:\mathrm{U}\to \mathbb{C}\},$ the corresponding action of $G_E$ on $T$ is
$(g_{E}(\alpha)f)(u)=f(\alpha^du).$Now, if $f(u)=f(\alpha^du)$ for all $\alpha,u\in\mathrm{U}$ then $f$ is a constant function,  which proves the Lemma.

\end{proof}

\begin{lemma}\label{lem7}
Let $W$ be a hereditary saturated subset of $V$ such that $\Gamma/W$ is a cycle, $E(V\setminus W, W)\neq\emptyset,$ $\Gamma/W=\{v_1,\cdots,v_d\}.$ Then
$\left(\sum\limits_{i=1}^d v_iCK(\Gamma)v_i\right)\cap Z(CK(\Gamma))=(0).$
\end{lemma}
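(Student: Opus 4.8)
The plan is to show that every $z$ in $\left(\sum_{i=1}^d v_iCK(\Gamma)v_i\right)\cap Z(CK(\Gamma))$ is zero. First I would record the elementary consequences of membership in the corner together with centrality. Writing $z=\sum_i v_izv_i$, for any vertex $w\in W$ we have $v_iw=0$, so $zw=0$; more generally, if $p\in Path(\Gamma)$ has $r(p)=w\in W$ then $pz=p(wz)=p(zw)=0$, and hence $zp=pz=0$. Consequently $z\,\overline I(W)=\overline I(W)\,z=(0)$, and the same holds for $z^*$, which is again central and in the corner. The exit hypothesis supplies an edge $f$ with $s(f)=v_k$ and $r(f)=w\in W$; this edge, lying in $\overline I(W)$, is what will ultimately obstruct a nonzero $z$.

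Next I would pass to the quotient. Since $W$ is hereditary and saturated, $CK(\Gamma)/\overline I(W)\cong CK(\Gamma/W)$, the algebra of the cycle $\{v_1,\dots,v_d\}$; let $\pi$ be the quotient map. From $z\,\overline I(W)=\overline I(W)\,z=(0)$ one gets, for every $a\in\overline I(W)$, that $z^*a=a^*z=za^*=0$, so $(z-a)^*(z-a)=z^*z+a^*a$ is a sum of orthogonal positive elements; hence $\|z-a\|\ge\|z\|$ and $\|\pi(z)\|=\|z\|$. Thus it suffices to prove that $\pi$ annihilates the relevant part of $z$. To create gauge-invariance I would use the gauge action: the corner and the center are $G_E$-invariant, so the Fourier components $z_{(n)}=\int_{\mathrm U}\alpha^{-n}g_E(\alpha)(z)\,d\alpha$ are again central and in the corner, and $h_n:=z_{(n)}z_{(n)}^*$ is a positive central corner element of gauge-degree $0$. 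If each $h_n=0$ then $z_{(n)}=0$ for all $n$ and $z=0$; so the statement reduces to the gauge-invariant case.

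The core step is therefore: a gauge-invariant central corner element $h$ must vanish. Here $\pi(h)$ is central in the cycle algebra $CK(\Gamma/W)$ and, since the gauge action descends through $\pi$, is fixed by all gauge automorphisms; by Lemma~\ref{lem6} it is a scalar $\lambda\cdot 1$. Then $h-\lambda 1\in\overline I(W)$, and $h\,\overline I(W)=(0)$ yields $h^2=\lambda h$. If $\lambda\neq 0$ then $e=\lambda^{-1}h$ is a central idempotent of $CK(\Gamma)$ contained in the corner, so by the Boolean correspondence recalled in Section~1 we have $e=e(W'')$ for a finitary hereditary annihilator subset $W''$. The corner condition $eW=(0)$ forces $W''\cap W=\emptyset$, i.e. $W''\subseteq\{v_1,\dots,v_d\}$; but if $W''\neq\emptyset$ then, the cycle being strongly connected, $v_k\in W''$, and hereditarity together with the exit $f$ gives $w=r(f)\in W''\subseteq V\setminus W$, a contradiction. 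Hence $W''=\emptyset$, $e=0$, forcing $\lambda=0$ and $\|h\|=\|\pi(h)\|=0$.

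The main obstacle is exactly this last step, and it is where the hypothesis $E(V\setminus W,W)\neq\emptyset$ is used: hereditarity propagates any nonempty $W''\subseteq\{v_1,\dots,v_d\}$ across the exit into $W$, which is impossible. This is the structural reason that a cycle factor \emph{with} an exit contributes nothing to the center, in contrast with the $NE$-cycles appearing in Theorem~\ref{thm1}(ii). By comparison, the norm identity $\|\pi(h)\|=\|h\|$ and the gauge-averaging reduction are routine once the orthogonality relations $z\,\overline I(W)=\overline I(W)\,z=(0)$ have been established.
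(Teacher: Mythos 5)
Your route is genuinely different from the paper's, and most of it is sound. The paper never passes to the quotient or uses Lemma \ref{lem6} inside this proof: it first factors out the sub-ideal $\overline{I}(W')$ with $W'=\{w\in W\mid E(V\setminus W,W)CK(\Gamma)w=(0)\}$, observes that a central corner element $z$ satisfies $ze=0$ for every $e\in E(V\setminus W,W)$, and then places $z$ in the closed gauge-invariant annihilator ideal $J$ of $E(V\setminus W,W)$; the cited fact that a nonzero closed gauge-invariant ideal must contain a vertex gives an immediate contradiction, since no $v_i$ and (after the reduction) no $w\in W$ can lie in $J$. Your preliminary steps --- $z\overline{I}(W)=\overline{I}(W)z=(0)$, the norm identity $\|\pi(z)\|=\|z\|$ via orthogonality of positive elements, the reduction by Fourier components to a gauge-invariant positive central corner element $h$, and the use of Lemma \ref{lem6} to get $\pi(h)=\lambda\cdot 1$ and $h^2=\lambda h$ --- are all correct.

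The gap is in the last step. To kill the central idempotent $e=\lambda^{-1}h$ you invoke the bijection between central idempotents of $CK(\Gamma)$ and finitary hereditary annihilator subsets. In the paper that bijection is stated only as a refinement of Theorem \ref{thm1}: the surjectivity of $W''\mapsto e(W'')$ onto central idempotents of the $C^*$-algebra is precisely the idempotent part of the description of $Z(CK(\Gamma))$, and Lemma \ref{lem7} is an ingredient of the proof of Theorem \ref{thm1}. The references establish the correspondence for $L(\Gamma)$, but your $e$ lives a priori only in $CK(\Gamma)$, not in the dense subalgebra, so as written the argument is circular. The fix is short and uses only what the paper has already cited: $e$ is a gauge-invariant central projection, so the closed ideal $eCK(\Gamma)$ is gauge invariant; if $e\neq 0$ it contains a vertex $u$ with $eu=u$. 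Since $eW=(0)$, $u$ must be some $v_i$; then the path from $v_i$ around the cycle to $s(f)$ followed by the exit $f$ puts $w=r(f)$ into the ideal $eCK(\Gamma)$, whence $w=ew=0$, a contradiction. With that replacement your proof is complete and is an acceptable alternative to the paper's, at the cost of importing the quotient and gauge-averaging machinery that the paper reserves for Lemma \ref{lem8}.
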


\begin{proof}
Consider the set $$W'=\{w\in W\mid E(V\setminus W,W)CK(\Gamma)w=(0)\}.$$
The set $W'$ is hereditary and saturated. Moreover, $\left(\sum\limits_{i=1}^d v_iCK(\Gamma)v_i\right)\cap \overline{I}(W')=(0).$
Indeed, we only need to notice that if $p\in Path(\Gamma),$ $s(p)=v_i$ and $r(p)\in W'$ then $p=0.$ Let $p=e_1\cdots e_n,$ $e_i\in E.$
At least one edge $e_j, 1\leq j\leq n,$ lies in $E(V\setminus W, W).$ This implies the claim.
Factoring out $\overline{I}(W')$ we can assume that $W'=\emptyset.$

Let $0\neq z\in\left(\sum\limits_{i=1}^d v_iCK(\Gamma)v_i\right)\cap Z(CK(\Gamma)).$ For an arbitrary edge $e\in E(V\setminus W, W)$ we have $ze=ez.$ Hence $ze=zer(e)=ezr(e)=0.$
Consider the ideal $$J=\{a\in CK(\Gamma)\mid E(V\setminus W,W)CK(\Gamma)a=aCK(\Gamma)E(V\setminus W,W)=(0)\}$$ of the algebra $CK(\Gamma).$ The element $z$ lies in $J.$ The ideal $J$ is gauge invariant.
Hence by [BPRS], [BHRS] $J\cap V\neq\emptyset.$ A vertex $v_i,\, 1\leq i\leq d,$ can not lie in $J$ because $v_iCK(\Gamma)E(V\setminus W,W)\neq (0).$ On the other hand
$J\cap W\subseteq W'=\emptyset,$ a contradiction, which proves the Lemma.

\end{proof}

\begin{lemma}\label{lem8}
Let $W$ be a hereditary saturated subset of $V$ such that $\Gamma/W$ is a cycle and  $E(V\setminus W, W)\neq\emptyset.$  Then
$Z(CK(\Gamma))\subseteq \mathbb{C}\cdot 1_{CK(\Gamma)}+\overline{I}(W).$
\end{lemma}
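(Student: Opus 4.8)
The plan is to pass to the quotient by $\overline{I}(W)$ and show that a central element becomes a scalar there. Since $W$ is hereditary and saturated, $CK(\Gamma)/\overline{I}(W)\cong CK(\Gamma/W)$, and because $\Gamma/W$ is a cycle on $d=|V\setminus W|$ vertices, Example~\ref{ex1} gives $CK(\Gamma/W)\cong M_d(T)$ with center $Z(CK(\Gamma/W))=T=C^*(z(C))\cong C(\mathrm{U})$. Writing $\pi\colon CK(\Gamma)\to CK(\Gamma/W)$ for the quotient map, it suffices to prove $\pi(z)\in\mathbb{C}\cdot 1$ for every $z\in Z(CK(\Gamma))$, since then $z-\pi(z)\cdot 1\in\ker\pi=\overline{I}(W)$. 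As $\pi$ is surjective, $\pi(z)$ is central in $M_d(T)$, hence lies in $T$; and since $z(C)$ is unitary of gauge-degree $d$, the center $T$ is graded by multiples of $d$.

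Next I would bring in the gauge group. Because $\overline{I}(W)$ is generated by the gauge-fixed vertices $W$ it is gauge invariant, so each $g_E(\alpha)$ descends and $\pi\circ g_E(\alpha)=g_{E''}(\alpha)\circ\pi$, where $E''=E(V\setminus W,V\setminus W)$ are the cycle edges. For $m\neq 0$ form the homogeneous component $z_m=\int_{\mathrm{U}}\overline{\alpha}^{\,m}\,g_E(\alpha)(z)\,d\alpha$; it is central, because the closed center is gauge invariant, and $\pi(z_m)$ is the $m$-th Fourier coefficient of $\pi(z)\in C(\mathrm{U})$ under the gauge action. Thus it suffices to show $\pi(z_m)=0$ for all $m\neq 0$: then $\pi(z)$ is gauge invariant, and Lemma~\ref{lem6}, applied to the cycle $\Gamma/W$, forces $\pi(z)\in\mathbb{C}\cdot 1$. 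Since $Z(CK(\Gamma/W))$ is graded by multiples of $d$, only the indices $m=nd$, $n\neq 0$, require attention.

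For such $m$ I would invoke Lemma~\ref{lem7}. Put $P=\sum_{i=1}^d v_i$ and $Q=1-P=\sum_{w\in W}w$. Centrality of $z_m$ yields $z_m=z_mP+z_mQ$, where $z_mP=\sum_i v_iz_mv_i\in\sum_i v_iCK(\Gamma)v_i$ and $z_mQ\in\overline{I}(W)$; moreover $z_mQ$ is central in $CK(W)$. If $z_mQ=0$, then $z_m=z_mP$ is a central element lying in $\sum_i v_iCK(\Gamma)v_i$, so $z_m=0$ by Lemma~\ref{lem7} (applicable since $E(V\setminus W,W)\neq\emptyset$), whence $\pi(z_m)=0$. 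This already settles the case where $W$ contains no $NE$-cycle: by the inductive form of the Theorem, $Z(CK(W))$ is then spanned by the degree-$0$ idempotents $e(W')$, so its degree-$m$ part vanishes for $m\neq 0$ and $z_mQ=0$.

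The hard part is when $z_mQ\neq 0$. By induction $z_mQ$ is a combination of the cycle elements of the finitary $NE$-cycles $C''$ of $\Gamma$ contained in $W$, carrying no idempotent part since it is homogeneous of nonzero degree. My idea is to subtract from $z$ the genuinely $\Gamma$-central elements $\sum_{p\in Arr(C'')}p\,w_{C''}\,p^*\in Z(CK(\Gamma))\cap\overline{I}(W)$, reducing to the case $z_mQ=0$ and then applying Lemma~\ref{lem7}. The main obstacle is to make this subtraction legitimate: one must check that each $C''$ occurring in $z_mQ$ is finitary in $\Gamma$ and not merely in $W$ (paths winding around the exterior cycle $v_1,\dots,v_d$ before entering $W$ threaten to produce infinitely many arrival paths), and must reconcile $Arr(C'')$ computed inside $W$ with $Arr(C'')$ computed in $\Gamma$. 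I expect Lemma~\ref{lem3}, used in $\Gamma$ to detect $V(C'')$, together with the hereditary structure, to supply exactly the finiteness needed. Once $z_mQ$ is absorbed, $z_m$ is central and supported on $\{v_1,\dots,v_d\}$, so $\pi(z_m)=0$ by Lemma~\ref{lem7}; assembling the pieces gives $\pi(z)\in\mathbb{C}\cdot 1$, i.e. $z\in\mathbb{C}\cdot 1_{CK(\Gamma)}+\overline{I}(W)$.
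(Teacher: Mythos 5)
The first half of your argument is sound and close in spirit to the paper's: reduce to showing that $\pi(z)$ is scalar, use the gauge action to isolate homogeneous components, and kill a central element supported on $\sum_i v_iCK(\Gamma)v_i$ by Lemma~\ref{lem7}. The genuine gap is exactly the step you flag as the ``hard part,'' and the fix you anticipate does not exist: a $NE$-cycle $C''$ that is finitary in $W$ need \emph{not} be finitary in $\Gamma$, and Lemma~\ref{lem3} cannot rescue this because it applies only to central elements lying in $\overline{I}(W)$, whereas $z_m$ is precisely the element you are trying to show lies there. Concretely, take $V=\{v,w\}$, $W=\{w\}$, a loop $e$ at $v$, a loop $f$ at $w$, and an edge $g:v\to w$. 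Then $W$ is hereditary and saturated, $\Gamma/W$ is a cycle of length one, $E(V\setminus W,W)=\{g\}\neq\emptyset$, and the loop $f$ is a finitary $NE$-cycle of $(W,E(W,W))$; but $Arr_\Gamma(\{w\})=\{w,g,eg,e^2g,\dots\}$ is infinite, so no element $\sum_{p\in Arr(C'')}p\,w_{C''}\,p^*$ exists in $CK(\Gamma)$ and there is nothing to subtract. In that situation your scheme needs a direct proof that $z_mQ=0$ for $m\neq 0$ (i.e.\ that cycles inside $W$ which are non-finitary in $\Gamma$ contribute nothing to the $W$-corner of a central element), and that assertion is left unproved; the proposal therefore does not close.

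The paper sidesteps the problem with a different choice of gauge subgroup. Instead of the full group $G_E$ and its Fourier components, it uses $G_{E'}$ with $E'=E\setminus E(V,W)$: it scales only the $d$ edges of the exterior cycle and fixes every generator of $CK(W)$. Writing $z=a+b$ with $a=\sum_i v_izv_i$ and $b=\sum_{w\in W}wzw$, one gets $g(b)=b$ for free, so $g(z)-z=g(a)-a$ is a central element of $\sum_i v_iCK(\Gamma)v_i$ and vanishes by Lemma~\ref{lem7}; thus $z$ is fixed by $G_{E'}$. Since $\overline{I}(W)$ is $G_{E'}$-invariant and $G_{E'}$ induces the full gauge group on $CK(\Gamma)/\overline{I}(W)\cong CK(\Gamma/W)$, Lemma~\ref{lem6} makes the image of $z$ scalar. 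This avoids entirely the inductive analysis of $z_mQ$ inside $CK(W)$ that your route cannot complete. To salvage your approach you would have to replace the subtraction step by a direct vanishing argument for the components of $z_mQ$ along cycles that are non-finitary in $\Gamma$ --- essentially a piece of Lemma~\ref{lem4} that is unavailable here because $z_m\notin\overline{I}(W)$.
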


\begin{proof}
As in the Lemma 7 we assume that $V\setminus W=\{v_1,\cdots,v_d\}, E'=E\setminus E(V,W)=\{e_1,\cdots,e_d\};$ $s(e_i)=v_i, 1\leq i\leq d,$ $r(e_i)=v_{i+1}$ for $1\leq i\leq d-1,$ $r(e_d)=v_1.$
Consider the action of the group $G_{E'}$ on $CK(\Gamma).$ Let $z\in Z(CK(\Gamma)),$ $z=a+b, a=\sum\limits_{i=1}^d v_izv_i, b=\sum\limits_{w\in W} wzw.$ Since every element from $G_{E'} $ fixes $CK(W)$ it follows that $g(b)=b$ for all $g\in G_{E'}.$ Now $g(z)=g(a)+b, g(z)-z=g(a)-a\in \left(\sum\limits_{i=1}^d v_iCK(\Gamma)v_i\right)\cap Z(CK(\Gamma))=(0)$ by Lemma 7.
we proved that an arbitrary element of $Z(CK(\Gamma))$ is fixed by $G_{E'}.$
The ideal $\overline{I}(W)$ is invariant with respect to $G_{E'}.$ Hence the group $G_{E'}$ acts on $CK(\Gamma)/\overline{I}(W)\cong CK(\Gamma/W)$ as the full gauge group of automorphisms. The image of the central element $z$ in $Z(CK(\Gamma/W))$ is fixed by $G_{\Gamma/W}.$ Hence by Lemma 6 it is scalar. This finishes the proof of the Lemma.
\end{proof}

\begin{proof}[Proof of Theorem 1.]
If $V$ does not contain proper hereditary subsets then by Lemma 5 $\Gamma$ is either simple or a cycle.
If the $C^*$-algebra $CK(\Gamma)$ is simple then $Z(CK(\Gamma))=\mathbb{C}\cdot 1$ ( see [D] ) and the assertion of the Theorem is clearly true.

If $C$ is a cycle then $Z(CK(\Gamma))\cong T$ and the assertion of the Theorem is again true.

Let now $W$ be a maximal proper hereditary subset of $V.$ The saturation $\widehat{W}$ is equal to $V$ or the set $W$ is saturated.
In the first case $CK(\Gamma)=\overline{I}(W)$ and it suffices to refer to Lemma 4. Suppose now that the set $W$ is saturated. The graph $\Gamma/W$ does not have proper hereditary
subsets. Again by Lemma 5 the graph $\Gamma/W$ is either simple or a cycle.
If $\Gamma/W$ is simple then $Z(CK(\Gamma/W))=\mathbb{C}\cdot 1,$ which implies $Z(CK(\Gamma)\subseteq \mathbb{C}\cdot 1+\overline{I}(W),$ which together with Lemma 4 implies the Theorem. If $\Gamma/W$ is a cycle then by Lemma 8 we again have $Z(CK(\Gamma)\subseteq \mathbb{C}\cdot 1+\overline{I}(W)$ and the Theorem follows.

\end{proof}
\section*{Acknowledgement}
This project was funded by the Deanship of Scientific Research (DSR), King Abdulaziz University, under Grant No.
(27-130-36-HiCi). The authors, therefore, acknowledge technical and financial support of KAU.

\end{document}